\newtheorem{theorem}{Theorem}
\newtheorem{example}[theorem]{Example}
\newtheorem{lemma}[theorem]{Lemma}
\newtheorem{remark}[theorem]{Remark}
\newenvironment{proof}{{\it Proof.}}{\hfill $\square $}
\title{Note on the location of zeros of polynomials}
\author{ J. Rubi\'{o}--Masseg\'{u}
  \\*[.1truecm]
{\small \textsl{Departament de Matem\`{a}tica Aplicada III,}}
\\*[-.25truecm] {\small \textsl{Control, Din\`{a}mica i Aplicacions (CoDALab)}}
\\*[-.25truecm] {\small \textsl{Universitat Polit\`{e}cnica de Catalunya}}
\\*[-.25truecm] {\small \textsl{Avinguda de les Bases de Manresa, 61--73, 08242 Manresa, Spain}}
\\*[-.25truecm] {\small \textsl{E--mail address: josep.rubio@upc.edu}}}
\begin{document}

\date{}
\maketitle

\begin{abstract}
\noindent In this note, we provide a wide range of upper bounds for the moduli
of the zeros of a complex polynomial. The obtained bounds complete a series of previous papers on the location of
zeros of polynomials.
\end{abstract}

\vskip10pt
\noindent \textit{Keywords}: Complex polynomials; Location of zeros of polynomials; Cauchy's bound

\noindent \textit{2000MSC}: 26C10, 30C15, 65H05

\section{Introduction}

The theory of the location of zeros of polynomials has applications
in several areas of contemporary applied mathematics, including
linear control systems, electrical networks, root approximation,
signal processing and coding theory. 
Because of its applications, there is a need for obtaining better
and better results in this subject. A review on the location of
zeros of polynomials can be found in \cite{M1966,MMR1994,RS2002}.

In what follows, $P(z)$ is the complex polynomial
\begin{equation}\label{P}
P(z)=z^{n}+a_{1}z^{n-1}+\cdots +a_{n-1}z+a_{n}.
\end{equation}
Without loss of generality we will assume that $a_{j}\neq 0$ for at
least one $j$, and set $a_{j}=0$ for $j>n$.  According to a result
of Cauchy \cite{Cauchy1829}, all the zeros of the polynomial $P(z)$
are in the circle $|z|\leq \rho$, where $\rho $ is the unique
positive zero of the real polynomial
\begin{equation}\label{Q}
Q(x)=x^{n}-|a_{1}|x^{n-1}-\cdots -|a_{n-1}|x-|a_{n}|.
\end{equation}
The upper bound $\rho$ is the best possible one which is
expressible in terms of the moduli of the coefficients. 
A classical result due to Cauchy \cite{Cauchy1829}
states that all the zeros of the polynomial $P(z)$ are contained in
the disk
\begin{equation}\label{Cauchy-bound}
|z|\leq \rho <1+A
\end{equation}
with $A=\max_{1\leq j\leq n}|a_{j}|$. As an improvement, Joyal, Labelle and Rahman \cite{JLR1967} proved
the following theorem.

\begin{theorem} \label{TheoremJLR} All the zeros of $P(z)$ are contained
in the disk
\begin{equation}\label{JLR-bound}
|z|\leq \rho \leq
\frac{1}{2}(|a_{1}|+1+\sqrt{(|a_{1}|-1)^{2}+4A_{2}})
\end{equation}
with $A_{2}=\max_{2\leq j\leq n}|a_{j}|$.
\end{theorem}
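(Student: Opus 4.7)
The plan is to deduce the theorem from Cauchy's result cited in the excerpt: since every zero of $P(z)$ satisfies $|z|\le \rho$, where $\rho$ is the unique positive zero of the polynomial $Q(x)$ in \eqref{Q}, it suffices to prove that
\[
\rho \le R := \tfrac{1}{2}\bigl(|a_{1}|+1+\sqrt{(|a_{1}|-1)^{2}+4A_{2}}\bigr).
\]
Because $Q(x)/x^{n}=1-|a_{1}|/x-\cdots-|a_{n}|/x^{n}$ is strictly increasing on $(0,\infty)$, we have $Q(x)\ge 0$ for every $x\ge \rho$ and $Q(x)<0$ for $0<x<\rho$. Hence it is enough to exhibit $Q(R)\ge 0$.

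Next I would estimate $Q(x)$ using only the maximum $A_{2}=\max_{2\le j\le n}|a_{j}|$ of the ``tail'' of coefficients. For any $x>0$,
\[
Q(x)\ge x^{n}-|a_{1}|x^{n-1}-A_{2}\bigl(x^{n-2}+x^{n-3}+\cdots +1\bigr).
\]
A quick check shows that $R\ge 1$, with equality ruled out by the assumption that some $a_{j}\ne 0$, so $R>1$. This justifies restricting to $x>1$, where the geometric sum can be bounded by $\tfrac{x^{n-1}-1}{x-1}\le \tfrac{x^{n-1}}{x-1}$, giving
\[
Q(x)\ge x^{n-1}\!\left[\,x-|a_{1}|-\frac{A_{2}}{x-1}\right].
\]

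Multiplying the bracket by $(x-1)>0$ produces the quadratic $\varphi(x):=x^{2}-(1+|a_{1}|)x+(|a_{1}|-A_{2})$, whose two roots are precisely $\tfrac{1}{2}(|a_{1}|+1\pm\sqrt{(|a_{1}|-1)^{2}+4A_{2}})$. The larger root is $R$, so $\varphi(x)\ge 0$ for every $x\ge R$, and substituting $x=R$ yields $Q(R)\ge 0$. Combined with the monotonicity of $Q(x)/x^{n}$ this gives $\rho\le R$, which is the desired bound.

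The only delicate point is selecting the right comparison for the geometric sum: replacing $\tfrac{x^{n-1}-1}{x-1}$ by $\tfrac{x^{n-1}}{x-1}$ is what reduces the problem to a quadratic inequality whose roots exactly match the expression in \eqref{JLR-bound}. Once this is done, the remainder is a routine verification and a short check that $R>1$ under the standing hypothesis on the coefficients.
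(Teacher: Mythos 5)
Your argument is correct in substance and takes a genuinely different route from the paper. The paper never proves this bound directly: it cites Joyal--Labelle--Rahman and then recovers the statement as the $\ell=2$ instance of Theorem \ref{mainresult}, whose proof factors $P_{\ell}(x)=(x-1)F_{\ell}(x)-A_{\ell}$ and establishes the descending chain $r_{1}\ge r_{2}\ge\cdots\ge r_{q}>\max(1,\rho)$ by iteratively comparing consecutive $P_{\ell}$'s. You instead argue in one step: majorize the tail coefficients of $Q$ by $A_{2}$, sum the resulting geometric series, and reduce to the quadratic $\varphi(x)=x^{2}-(1+|a_{1}|)x+|a_{1}|-A_{2}$ --- which is exactly the paper's $P_{2}(x)=(x-1)(x-|a_{1}|)-A_{2}$ --- evaluated at its larger root $R$. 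Your route is more elementary and self-contained for this single statement; the paper's machinery costs more but yields the whole family of sharper bounds $1+\varepsilon_{\ell}$ together with their monotonicity.

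One step needs repair: the claim that $R>1$, ``with equality ruled out by the assumption that some $a_{j}\neq 0$,'' is false. Take $P(z)=z^{2}+\tfrac{1}{2}z$: then $A_{2}=0$, $|a_{1}|=\tfrac{1}{2}$, and $R=\tfrac{1}{2}\bigl(\tfrac{3}{2}+\tfrac{1}{2}\bigr)=1$. In general $R=1$ exactly when $A_{2}=0$ and $|a_{1}|\le 1$, and in that case your lower bound for $Q$, which divides by $x-1$, cannot be evaluated at $x=R$. The fix is immediate: if $A_{2}=0$ then $a_{1}\neq 0$ by the standing hypothesis, $Q(x)=x^{n-1}(x-|a_{1}|)$, so $\rho=|a_{1}|\le\max(1,|a_{1}|)=R$ directly; if $A_{2}>0$ then $\sqrt{(|a_{1}|-1)^{2}+4A_{2}}>|\,|a_{1}|-1\,|\ge 1-|a_{1}|$ gives $R>1$ and your main argument applies verbatim. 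With that case split added, the proof is complete.
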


For each positive integer number $\ell \geq 1$, let $Q_{\ell }(x)$
be the polynomial
\begin{equation}\label{Ql}
Q_{\ell }(x)= x^{\ell }+\sum_{v=2}^{\ell }\left[ C_{\ell -v}^{\ell
-1}-\sum_{j=1}^{v-1}C_{\ell -v}^{\ell -j-1}|a_{j}|\right] x^{\ell
+1-v},
\end{equation}
where $C_{s}^{m}$ ($0\leq s\leq m$) are the binomial coefficients
defined by $C_{s}^{m}=\frac{m!}{s!(m-s)!}$. More recently, Affane--Aji et
al. \cite{AAG2009} have obtained the following result.

\begin{theorem} \label{TheoremAAG} All the zeros of $P(z)$ satisfy $|z|<1+\delta _{\ell }$,
for $\ell =1,2,\ldots $, where $\delta _{\ell }$ (for $\ell $ a
positive integer) is the unique positive root of the $\ell $th
degree equation
\begin{equation}\label{Ql-equation}
Q_{\ell }(x)=A.
\end{equation}
Moreover, $1+\delta _{1}\geq 1+\delta _{2}\geq \cdots \geq 1+\delta
_{\ell }>\max (1,\rho )$, for all $\ell \geq 1$.
\end{theorem}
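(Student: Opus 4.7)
The idea is to simplify $Q_\ell$ to a compact closed form, derive the key inequality $Q_\ell(|z|-1)<A$ for every zero $z$ of $P$ by a Cauchy-style geometric-sum argument, and exploit a recursion relating $Q_{\ell+1}$ to $Q_\ell$ in order to compare consecutive bounds and compare with $\rho$.

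First I would rewrite the binomials in \eqref{Ql} as $C_{\ell-v}^{\ell-1}=\binom{\ell-1}{v-1}$ and $C_{\ell-v}^{\ell-j-1}=\binom{\ell-j-1}{v-j-1}$ via $\binom{m}{k}=\binom{m}{m-k}$, and then sum by the binomial theorem (first combining the leading $x^\ell$ with the $C_{\ell-v}^{\ell-1}$ block to produce $x(x+1)^{\ell-1}$, then for each fixed $j$ collapsing the inner sum in $v$ into $x(x+1)^{\ell-j-1}$). This yields the closed form
$$Q_\ell(x)=x\left[(x+1)^{\ell-1}-\sum_{j=1}^{\ell-1}|a_j|(x+1)^{\ell-j-1}\right].$$
In particular $Q_\ell(0)=0$, so $Q_\ell(x)-A$ has constant term $-A<0$ and positive leading coefficient; combined with the hypothesis that $\delta_\ell$ is its unique positive root, this forces $Q_\ell(x)<A$ on $[0,\delta_\ell)$ and $Q_\ell(x)>A$ on $(\delta_\ell,\infty)$.

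Now let $z$ be a zero of $P$; the case $|z|\leq 1$ is trivial, so assume $|z|>1$. From $|z|^n\leq\sum_{j=1}^n|a_j||z|^{n-j}$ I would split at $j=\ell$, bound the tail $j\geq\ell$ using $|a_j|\leq A$ and sum the resulting geometric series, then multiply by $(|z|-1)/|z|^{n-\ell+1}$. With $r=|z|-1$ the left side becomes $r(r+1)^{\ell-1}$ and the retained head becomes $r\sum_{j=1}^{\ell-1}|a_j|(r+1)^{\ell-j-1}$, so the closed form gives $Q_\ell(r)<A$, and the sign analysis yields $|z|=1+r<1+\delta_\ell$. For the monotonicity, the closed form supplies the recursion $Q_{\ell+1}(x)=(1+x)Q_\ell(x)-x|a_\ell|$; evaluating at $x=\delta_{\ell+1}$ and using $|a_\ell|\leq A$ gives $Q_\ell(\delta_{\ell+1})=(A+\delta_{\ell+1}|a_\ell|)/(1+\delta_{\ell+1})\leq A$, whence $\delta_{\ell+1}\leq\delta_\ell$. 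Finally $1+\delta_\ell>1$ is immediate, and when $\rho>1$ the same tail-bound argument applied to the Cauchy identity $\rho^n=\sum_{j=1}^n|a_j|\rho^{n-j}$ produces $Q_\ell(\rho-1)<A$, hence $\rho<1+\delta_\ell$.

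The main obstacle is spotting and verifying the closed-form collapse of \eqref{Ql} and the ensuing recursion; once those are in hand everything else is a direct generalisation of the classical Cauchy and Joyal--Labelle--Rahman estimates. A secondary technical point is to keep every inequality strict, which is ensured because the telescoping factor $1-|z|^{\ell-n-1}$ (and its analogue for $\rho$) is strictly less than $1$ whenever $n\geq\ell$.
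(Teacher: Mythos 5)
Your argument is essentially correct, but note that the paper never proves Theorem \ref{TheoremAAG} at all: it is quoted from \cite{AAG2009}, and the paper's own proofs concern the sharper Theorem \ref{mainresult} via Theorem \ref{mainresult2}. Measured against that machinery, your two structural ingredients are exactly the paper's: your closed form $Q_{\ell}(x)=x\bigl[(x+1)^{\ell-1}-\sum_{j=1}^{\ell-1}|a_{j}|(x+1)^{\ell-1-j}\bigr]$ is precisely Lemma \ref{Lemma2}'s identity $Q_{\ell}(x)=xF_{\ell}(1+x)$, and your recursion $Q_{\ell+1}(x)=(1+x)Q_{\ell}(x)-x|a_{\ell}|$ is the shifted form of the paper's $F_{\ell+1}(x)=xF_{\ell}(x)-|a_{\ell}|$; your evaluation of $Q_{\ell}$ at $\delta_{\ell+1}$ is the mirror image of the paper's evaluation of $P_{\ell+1}$ at $r_{\ell}$. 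Where you genuinely diverge is in how the zeros enter: you prove $|z|<1+\delta_{\ell}$ \emph{directly}, by splitting the triangle-inequality estimate $|z|^{n}\le\sum|a_{j}||z|^{n-j}$ at $j=\ell$ and summing the tail geometrically — a clean generalisation of Cauchy's original argument that makes the theorem self-contained — whereas the paper never re-derives a zero estimate: it invokes $|z|\le\rho$ as a black box and instead shows that all its radii dominate $\max(1,\rho)$ (for $\ell>q$ by the factorisation $P_{\ell}(x)=x^{\ell-q-1}(x-1)Q(x)$, then working backwards along the chain). Your route buys independence from Cauchy's theorem; the paper's buys the exact identification of the limiting value of the chain. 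One point you should not leave as a ``hypothesis'': the theorem asserts that $Q_{\ell}(x)=A$ has a \emph{unique} positive root, and your sign analysis on $[0,\delta_{\ell})$ uses this. Existence is immediate from $Q_{\ell}(0)=0<A$, but uniqueness requires an argument — e.g.\ the Descartes/factorisation argument of the paper's Lemma \ref{Lemma1} applied to $F_{\ell}$ (alternatively, observe that your proof only ever needs $\delta_{\ell}$ to be the \emph{largest} positive root, since all you use is $Q_{\ell}>A$ on $(\delta_{\ell},\infty)$). With that supplied, the proof is complete; your closing remark about strictness is right, and in fact the factor $1-|z|^{\ell-n-1}$ is $<1$ for every $\ell$, not only for $\ell\le n$ (for $\ell>n$ the tail is empty and one gets $Q_{\ell}(|z|-1)\le 0<A$ directly).
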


Theorem \ref{TheoremAAG} provides a tool for obtaining sharper
bounds for the location of the zeros of a polynomial. When $\ell =1$
it reduces to (\ref{Cauchy-bound}), and for $\ell =2$ it yields
\begin{equation}\label{AAGl=2}
|z|<1+\delta _{2}=\frac{1}{2}(|a_{1}|+1+\sqrt{(|a_{1}|-1)^{2}+4A}),
\end{equation}
which looks like (\ref{JLR-bound}) but never sharpens it. The cases
$\ell =3$ and $\ell =4$ give rise to cubic and quartic equations
which can be explicitly solved, and they are due to Sun and Hsieh
\cite{SH1996} and Jain \cite{J2006} respectively.

Observe that there is no way to establish a link between Theorem
\ref{TheoremJLR} and Theorem \ref{TheoremAAG}, except for the case
$\ell=2$, in which Theorem \ref{TheoremJLR} provides a better bound.
For example, if $P(z)=z^{n}+az^{n-1}$ with $ |a|>0 $, then Theorem
\ref{TheoremJLR} yields $|z|\leq \max (1,|a|)$. But $\rho =|a|$, so
$1+\delta _{\ell }>\max (1,|a|)$ for any $\ell $. Hence, in this case,
the bound obtained from Theorem \ref{TheoremJLR} is better than any
bound obtained from Theorem \ref{TheoremAAG}, although the last ones
may require a high computational cost in order to be obtained.

The aim of this note is to fill this gap by showing that Theorem
\ref{TheoremAAG} can be sharpened in a natural way, so that it
contains Theorem 1 as a particular case (see Theorem
\ref{mainresult}). Numerical examples will show that our improvement
provides bounds which may be considerably better than the preceding
ones.

The main result, Theorem \ref{mainresult}, completes the series of
papers \cite{AAG2009,SH1996,J2006} on the location of zeros of
polynomials, and fills the gaps between them and Theorem
\ref{TheoremJLR}, due to Joyal et al. \cite{JLR1967}.

\section{The main result}
In what follows, we denote by $\varepsilon_{\ell}$ the largest real
root of the $\ell$th degree equation
\begin{equation}\label{mainresult-equation}
Q_{\ell }(x)=A_{\ell}
\end{equation}
with $A_{\ell}=\max_{j\geq \ell}|a_{j}|$, $\ell\geq 1$,
($A_{\ell}=0$ for $\ell >n$). Additionally, $q\in \{1,\ldots ,n\}$
is defined by the conditions
\begin{equation}\label{q-conditions}
a_{q}\neq 0,\hskip5pt \text{ and }a_{j}=0\hskip5pt \text{ for }j>q.
\end{equation}

\begin{theorem} \label{mainresult}
We have
\begin{equation*}
1+\varepsilon _{1}\geq 1+\varepsilon _{2}\geq \cdots \geq
1+\varepsilon _{q}>\max (1,\rho )=1+\varepsilon _{q+1}=1+\varepsilon
_{q+2}=\cdots
\end{equation*}
In particular, all the zeros of the polynomial $P(z)$ satisfy
$|z|<1+\varepsilon _{\ell }$, for $\ell =1,\ldots ,q$, and $|z|\leq
1+\varepsilon _{\ell }$ for $\ell>q$. Furthermore, $\varepsilon _{\ell}$
(for $1\leq \ell \leq q$) is the unique positive root of the equation $Q_{\ell }(x)=A_{\ell}$.
\end{theorem}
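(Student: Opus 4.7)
The strategy is to pass to the reciprocal variable $w = 1/z$: a zero of $P$ with $|z|>1$ is the same as a zero of $g(w):=1+a_1 w+\cdots+a_n w^n$ with $0<|w|<1$. I would split at index $\ell$,
\begin{equation*}
g(w) = 1 + \sum_{j=1}^{\ell-1} a_j w^j + w^\ell h(w), \qquad h(w):=\sum_{k=0}^{n-\ell} a_{\ell+k}w^k,
\end{equation*}
and bound the tail by $|h(w)|\le A_\ell(1-|w|^{n-\ell+1})/(1-|w|)$. The triangle inequality, multiplied through by $(1-|w|)r^{\ell}$ with $r=|z|=1/|w|$, then gives
\begin{equation*}
r^{\ell-1}(r-1)\,|g(w)| \;\ge\; B_\ell(r) + \frac{A_\ell}{r^{\,n-\ell+1}},\qquad
B_\ell(r):=(r-1)\bigl(r^{\ell-1}-|a_1|r^{\ell-2}-\cdots-|a_{\ell-1}|\bigr)-A_\ell.
\end{equation*}

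The key algebraic step --- and the main bookkeeping obstacle --- is the identity $B_\ell(r)=Q_\ell(r-1)-A_\ell$, equivalently $Q_\ell(x)=x\bigl[(x+1)^{\ell-1}-|a_1|(x+1)^{\ell-2}-\cdots-|a_{\ell-1}|\bigr]$. I would verify this by expanding the right side via the binomial theorem and matching the coefficient of $x^{\ell+1-v}$ with $C_{\ell-v}^{\ell-1}-\sum_{j=1}^{v-1}C_{\ell-v}^{\ell-j-1}|a_j|$. Once the identity is in hand, the location bounds follow: for $1\le\ell\le q$ we have $A_\ell>0$ and $B_\ell(r)\ge 0$ on $[1+\varepsilon_\ell,\infty)$, so the strictly positive term $A_\ell/r^{n-\ell+1}$ forces $|g(w)|>0$ and hence $|z|<1+\varepsilon_\ell$; for $\ell>q$, $A_\ell=0$ and the extra term disappears, leaving only $|z|\le 1+\varepsilon_\ell$.

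Uniqueness of $\varepsilon_\ell$ as a positive root of $Q_\ell(x)=A_\ell$ (for $\ell\le q$) I would read off from the factored form $Q_\ell(x)=x\,\phi_\ell(x+1)$ with $\phi_\ell(r):=r^{\ell-1}-|a_1|r^{\ell-2}-\cdots-|a_{\ell-1}|$: by Descartes $\phi_\ell$ has a unique positive root and is strictly increasing beyond it, so $x\,\phi_\ell(x+1)$ is strictly increasing on the region where it is positive and attains the level $A_\ell>0$ exactly once. For the monotonicity, the recursion $\phi_{\ell+1}(r)=r\phi_\ell(r)-|a_\ell|$ gives
\begin{equation*}
B_{\ell+1}(r)-rB_\ell(r) \;=\; rA_\ell-(r-1)|a_\ell|-A_{\ell+1},
\end{equation*}
which a short case analysis (on whether $A_\ell=|a_\ell|$ or $A_\ell=A_{\ell+1}$) shows is $\ge 0$ for $r\ge 1$; evaluating at $r=1+\varepsilon_{\ell+1}$ forces $B_\ell(1+\varepsilon_{\ell+1})\le 0$, whence $1+\varepsilon_{\ell+1}\le 1+\varepsilon_\ell$. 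The strict jump $1+\varepsilon_q>\max(1,\rho)$ comes from evaluating $B_q$ at $r=\rho$ (using $\rho\,\phi_q(\rho)=|a_q|$, from the defining relation of $\rho$) to obtain $B_q(\rho)=-|a_q|/\rho<0$, together with $B_q(1)=-|a_q|<0$ for the case $\rho<1$. Finally, for $\ell>q$ the vanishing of $a_j$ beyond $q$ gives $\phi_\ell(r)=r^{\ell-1-q}\tilde{Q}(r)$, so $B_\ell(r)=(r-1)r^{\ell-1-q}\tilde{Q}(r)$ has largest real root exactly $\max(1,\rho)$, closing the chain.
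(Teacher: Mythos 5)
Your proposal is correct, and the algebraic skeleton coincides with the paper's: your $B_\ell(r)$ is exactly the paper's auxiliary polynomial $P_\ell(r)=(r-1)F_\ell(r)-A_\ell$, your identity $B_\ell(r)=Q_\ell(r-1)-A_\ell$ is the paper's Lemma \ref{Lemma2} (proved by the same binomial expansion), your uniqueness argument via Descartes is the paper's Lemma \ref{Lemma1}, and your monotonicity computation $B_{\ell+1}(r)-rB_\ell(r)=r(A_\ell-|a_\ell|)+|a_\ell|-A_{\ell+1}\geq 0$ is literally the paper's display (\ref{aux3}), only evaluated at $r_{\ell+1}$ instead of at $r_\ell$. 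The genuine difference is in how the zero bound itself is obtained. The paper never re-derives it: it invokes Cauchy's theorem that all zeros satisfy $|z|\leq\rho$ and reduces the entire theorem to proving the chain $r_1\geq\cdots\geq r_q>\max(1,\rho)=r_{q+1}=\cdots$, after which $|z|\leq\rho<r_\ell$ is automatic. You instead prove the bound from scratch via the reversed polynomial $g(w)=w^nP(1/w)$ and a geometric-series estimate on the tail, which buys a self-contained argument (and makes visible the strictly positive slack $A_\ell/r^{\,n-\ell+1}$ responsible for the strict inequality when $\ell\leq q$), at the cost of extra analytic bookkeeping --- and you still need the chain of inequalities anyway to compare the bounds with $\max(1,\rho)$. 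Two small points to tighten: the claim that $\phi_\ell$ is strictly increasing beyond its unique positive root does not follow from Descartes alone; the paper supplies the missing step by writing $F_\ell(x)=(x-\alpha)S(x)$ and observing that the single sign change forces $S$ to have nonnegative coefficients. You should also note the degenerate case $a_1=\cdots=a_{\ell-1}=0$, where $\phi_\ell(r)=r^{\ell-1}$ has no positive root but the conclusion still holds trivially. Both are easily patched.
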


Observe that when $\ell =1$, the bound obtained from this result reduces to
(\ref{Cauchy-bound}), as Theorem \ref{TheoremAAG} did. However, for
$\ell =2$ now we obtain the bound (\ref{JLR-bound}), thus meeting
Theorem \ref{TheoremJLR} as desired. Finally, if $\ell>q$, we have
$|z|\leq 1+\varepsilon _{\ell }=\max (1,\rho )$, which reduces to
$|z|\leq \rho$ when $\rho \geq 1$.

Next, we prove that Theorem \ref{mainresult} sharpens Theorem
\ref{TheoremAAG}. To this end, we shall show that $\varepsilon
_{\ell }\leq \delta _{\ell }$ for any $\ell $. Indeed, if $\ell >q$
then $1+\varepsilon _{\ell }=\max (1,\rho )<1+\delta _{\ell }$,
hence $\varepsilon _{\ell }<\delta _{\ell }$. For $1\leq \ell \leq
q$, we consider the polynomial $R(x)= Q_{\ell }(x)-A_{\ell }$. Since
$ R(0)=-A_{\ell }<0$ then $R(x)<0$ for $0<x<\varepsilon _{\ell }$,
and taking into account that $R(\delta _{\ell })=A-A_{\ell }\geq 0$,
we obtain $\delta _{\ell }\geq \varepsilon _{\ell }$, and we are
done. Observe that $\varepsilon _{\ell }< \delta _{\ell }$ when
$A_{\ell}<A$.

\begin{remark}
By looking at equations (\ref{Ql-equation}) and
(\ref{mainresult-equation}), the algorithm using MATLAB that has
been successfully developed in \cite{AAG2009}, and which has as its
output the upper bound $1+\delta_{\ell}$, may be used to obtain an
algorithm having as its output the upper bound
$1+\varepsilon_{\ell}$, by simply replacing number $A$ in that
algorithm by number $A_{\ell}$. This simple modification should be
taken under consideration, since the bounds obtained from Theorem
\ref{mainresult} may be considerably better than the ones obtained
from Theorem \ref{TheoremAAG}.
\end{remark}

\begin{example}
For the polynomial
$P(z)=z^{5}+3z^{4}+2z^{2}+2$,  we have $\rho =3.21256$ and it
coincides with the largest modulus of the zeros. On the other
hand, we have
\begin{equation*}
\begin{tabular}{|c|c|c|}
\hline $\ell $ & $1+\varepsilon _{\ell }$ & $1+\delta _{\ell }$ \\
\hline $1$ & $4.00000$ & $4.00000$ \\ \hline $2$ & $3.73205$ & $4.00000$ \\
\hline $3$ & $3.26953$ & $3.37442$ \\ \hline $4$ &
$3.26953$ & $3.30278$ \\ \hline $5$ & $3.21989$ & $3.23138$ \\
\hline $6$ & $3.21256(=\rho)$ & $3.22350$
\\ \hline
\end{tabular}
\end{equation*}

\end{example}

\begin{example}
Let $P(z)=z^{10}+2z^{9}-3z^{8}+2z^{5}-z^{4}+z+2$. Then $\rho =3.02120$
and the largest modulus of the zeros is $3.02106$. We have
\begin{equation*}
\begin{tabular}{|c|c|c|}
\hline $\ell $ & $1+\varepsilon _{\ell }$ & $1+\delta _{\ell }$ \\
\hline $1$ & $4.00000$ & $4.00000$ \\ \hline $2$ & $3.30278$ & $3.30278$ \\
\hline $3$ & $3.21432$ & $3.30278$ \\ \hline $4$ & $3.07678$ &
$3.11111$ \\ \hline $5$ & $3.02675$ & $3.03942$
\\ \hline $10$ & $3.02124$ & $3.02129$
\\ \hline $11$ & $3.02120$ & $3.02125$
\\ \hline
\end{tabular}
\end{equation*}

\end{example}

\begin{example} For the polynomial $P(z)=z^{20}-0.6z^{19}-0.3z^{15}-0.2z^{8}-0.1z-0.2$, we
have $\rho =1.05673$, which coincides with the largest modulus of
the zeros. The bounds are
\begin{equation*}
\begin{tabular}{|c|c|c|}
\hline $\ell $ & $1+\varepsilon _{\ell }$ & $1+\delta _{\ell }$ \\
\hline $1$ & $1.60000$ & $1.60000$ \\ \hline $2$ & $1.38310$ & $1.60000$ \\
\hline $3$ & $1.31742$ & $1.46954$ \\ \hline $4$ & $1.27413$ &
$1.39150$ \\ \hline $5$ & $1.24297$ & $1.33864$
\\ \hline $6$ & $1.20500$ & $1.31930$
\\ \hline $10$ & $1.15805$ & $1.22986$
\\ \hline $21$ & $1.05673$ & $1.14649$
\\ \hline
\end{tabular}
\end{equation*}

\end{example}

\section{Proof of the main result}

In order to prove Theorem \ref{mainresult}, first we will prove a
result that will be
shown to be equivalent to the main result, and which is interesting
in itself because it simplifies considerably the expression of the
equation to be solved.

For $\ell\geq2$ an integer number, let $r_{\ell}$ be the largest
real zero of the $\ell$th degree polynomial
\begin{equation}\label{Pl}
P_{\ell }(x)= x^{\ell }-(|a_{1}|+1)x^{\ell -1}-\sum_{j=2}^{\ell
-1}(|a_{j}|-|a_{j-1}|)x^{\ell -j}-(A_{\ell }-|a_{\ell -1}|),
\end{equation}
where $a_j=0$ for $j>n$. When $\ell=1$ we define $r_1=1+A$, the
unique zero of the polynomial $P_{1}(x)= x-(1+A)$. In what follows,
$q$ is defined by the conditions (\ref{q-conditions}). Then,

\begin{theorem} \label{mainresult2}
We have
\begin{equation}\label{chain-mainresult2}
r_{1}\geq r_{2}\geq \cdots \geq r_{q}>\max (1,\rho
)=r_{q+1}=r_{q+2}=\cdots
\end{equation}
In particular, all the zeros of the polynomial $P(z)$ satisfy
$|z|<r_{\ell }$, for $\ell =1,\ldots ,q$, and $|z|\leq r_{\ell }$
for $\ell>q$. Furthermore, $r_{\ell}$
(for $1\leq \ell \leq q$) is the unique zero of the
polynomial $P_{\ell}(x)$ in the interval $[1,+\infty )$.
\end{theorem}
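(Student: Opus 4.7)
The backbone is a Cauchy-type estimate tailored to $P_\ell$. For $|z|>1$ the triangle inequality gives $|P(z)|\ge |z|^n-\sum_{j=1}^n|a_j||z|^{n-j}$; splitting the sum at $j=\ell$ and bounding the tail $\sum_{j=\ell}^n|a_j||z|^{n-j}\le A_\ell(|z|^{n-\ell+1}-1)/(|z|-1)$ by a geometric sum, then multiplying through by $(|z|-1)$ and rearranging, produces the master inequality
\begin{equation*}
(|z|-1)|P(z)|\ \ge\ |z|^{n-\ell+1}\,P_\ell(|z|)+A_\ell,\qquad |z|>1,\ \ell\ge 2.
\end{equation*}
For $\ell=1$ the claim is just Cauchy's bound (\ref{Cauchy-bound}). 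Everything that follows concerns locating the largest real root of $P_\ell$.

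A direct computation yields the factorization $P_\ell(x)=(x-1)T_\ell(x)-A_\ell$, where $T_\ell(x):=x^{\ell-1}-\sum_{j=1}^{\ell-1}|a_j|x^{\ell-1-j}$; in particular $P_\ell(1)=-A_\ell$ by telescoping. For $1\le\ell\le q$ this is strictly negative, so by the intermediate value theorem $P_\ell$ has at least one zero in $(1,\infty)$. To establish uniqueness of $r_\ell$ there, I would note that at any such zero $x_0>1$ one has $T_\ell(x_0)=A_\ell/(x_0-1)>0$; differentiating $T_\ell(x)=x^{\ell-1}\bigl(1-\sum_{j=1}^{\ell-1}|a_j|x^{-j}\bigr)$ and using $T_\ell(x_0)>0$ then gives $T_\ell'(x_0)>0$, whence $P_\ell'(x_0)=T_\ell(x_0)+(x_0-1)T_\ell'(x_0)>0$. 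Every zero of $P_\ell$ in $(1,\infty)$ is thus a simple upward crossing, and a second such zero would force an intervening downward crossing, contradicting $P_\ell'>0$ at every root.

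With uniqueness in hand, the master inequality at $|z|\ge r_\ell$ gives $P_\ell(|z|)\ge 0$ and hence $(|z|-1)|P(z)|\ge A_\ell>0$, forcing $|z|<r_\ell$ for every zero of $P$. For the monotonicity I would exploit the recurrence $T_{\ell+1}(x)=xT_\ell(x)-|a_\ell|$, which translates into $P_{\ell+1}(x)=xP_\ell(x)+x(A_\ell-|a_\ell|)+(|a_\ell|-A_{\ell+1})$. Since $A_\ell=\max(|a_\ell|,A_{\ell+1})$, a two-case check (splitting on whether $|a_\ell|\ge A_{\ell+1}$ or not, using $r_\ell>1$ in the second case) gives $P_{\ell+1}(r_\ell)\ge 0$; combined with $P_{\ell+1}(1)=-A_{\ell+1}\le 0$ and the uniqueness of $r_{\ell+1}$ in $(1,\infty)$ (for $\ell+1\le q$), this forces $r_{\ell+1}\le r_\ell$.

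Finally, for $\ell>q$ the vanishing $|a_j|=0$ for $j>q$ produces the factorization $T_\ell(x)=x^{\ell-1-q}\widetilde Q(x)$, where $\widetilde Q(x):=x^q-\sum_{j=1}^q|a_j|x^{q-j}$ has $\rho$ as its unique positive root; since $A_\ell=0$ this yields $P_\ell(x)=(x-1)\,x^{\ell-1-q}\,\widetilde Q(x)$, whose largest real root is exactly $\max(1,\rho)$. The strict gap $r_q>\max(1,\rho)=r_{q+1}$ follows from the same recurrence: $P_{q+1}(r_q)=|a_q|>0$, while $P_{q+1}$ vanishes at $\max(1,\rho)$ and is strictly positive beyond it, so $r_q$ must lie to the right of $\max(1,\rho)$. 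The main obstacle I anticipate is the uniqueness in the second paragraph: the key non-trivial input is the implication $T_\ell(x_0)>0\Rightarrow T_\ell'(x_0)>0$ at $x_0>0$, and the entire chain of comparisons rests on it.
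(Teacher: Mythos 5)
Your proof is correct, and it diverges from the paper's in two genuine ways. First, for the uniqueness of the root of $P_{\ell}$ in $[1,+\infty)$ (Lemma \ref{Lemma1} in the paper), the paper argues via Descartes' rule of signs: it factors $F_{\ell}(x)=(x-\alpha)S(x)$ with $S$ having nonnegative coefficients and deduces that $P_{\ell}$ is strictly increasing on $[\max(1,\alpha),+\infty)$ and negative before that point. You instead show that $P_{\ell}'>0$ at every root in $(1,+\infty)$, using $T_{\ell}(x_0)=A_{\ell}/(x_0-1)>0$ together with the observation that $x\mapsto 1-\sum_{j=1}^{\ell-1}|a_j|x^{-j}$ is nondecreasing, so every crossing is upward; this is more economical, and the key implication $T_{\ell}(x_0)>0\Rightarrow T_{\ell}'(x_0)>0$ does hold for $\ell\ge 2$ and $x_0>0$ (the case $\ell=1$ being trivial). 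Second, the paper obtains the zero--location statement $|z|<r_{\ell}$ for free from Cauchy's theorem ($|z|\le\rho$) once the chain $r_{\ell}>\max(1,\rho)$ is proved, whereas your ``master inequality'' $(|z|-1)|P(z)|\ge |z|^{n-\ell+1}P_{\ell}(|z|)+A_{\ell}$ re-derives the bound directly from the triangle inequality and a geometric-sum estimate; it checks out and is self-contained, at the cost of being redundant given that you still prove the chain. The remaining ingredients --- the recurrence $T_{\ell+1}(x)=xT_{\ell}(x)-|a_{\ell}|$, the evaluation $P_{\ell+1}(r_{\ell})=r_{\ell}(A_{\ell}-|a_{\ell}|)+|a_{\ell}|-A_{\ell+1}\ge 0$, the factorization $P_{\ell}(x)=(x-1)x^{\ell-1-q}\widetilde Q(x)$ for $\ell>q$, and the strict gap via $P_{q+1}(r_q)=|a_q|>0$ --- coincide with the paper's argument; just note that the last step also uses (as the paper does implicitly) that $\widetilde Q\le 0$ on $(0,\rho]$, so that positivity of $P_{q+1}$ at $r_q>1$ really forces $r_q>\rho$.
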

\begin{proof} By dividing the polynomial $P_{\ell }(x)$
by $x-1$, we have
\begin{equation}\label{Pl-equation}
P_{\ell }(x)= (x-1)F_{\ell }(x)-A_{\ell},
\end{equation}
where
\begin{equation}\label{Fl}
F_{\ell }(x)= x^{\ell -1}-|a_{1}|x^{\ell -2}-\cdots -|a_{\ell
-2}|x-|a_{\ell -1}|,\hskip10pt \text{for }\ell \geq 2,
\end{equation}
and $F_{1}(x)= 1$.

We need a lemma which is part of the statement of Theorem
\ref{mainresult2}.

\begin{lemma} \label{Lemma1}
If $1\leq \ell \leq q$, then $P_{\ell }(x)$ has a unique zero in $
[1,+\infty )$.
\end{lemma}
\begin{proof} Let $\ell\in\{1,2,\ldots,q\}$ be given. Since
$P_{\ell }(1)=-A_{\ell }<0$ and $P_{\ell }(x)$ tends to $+\infty$
when $x$ tends to $+\infty $, there exists at
least one zero of $P_{\ell }(x)$ in $[1,+\infty )$. Let $\alpha$ be the largest real
zero of $F_{\ell }(x)$ (for $\ell=1$ we set $\alpha=0$). By Descarte's rule of signs, if
some coefficient $a_{j}$ for $1\leq j\leq \ell -1$ is nonzero, then
$\alpha$ is the unique strictly positive zero of $F_{\ell }(x)$;
otherwise $\alpha=0$.

Set $\mu=\max (1,\alpha)$, and we claim that $P_{\ell }(x)$ is
strictly increasing for $x\geq \mu$. In fact, if $\alpha=0$ (i.e.
$F_{\ell }(x)$ has the form $F_{\ell }(x)= x^{\ell -1}$), then
$P_{\ell}(x)=(x-1)x^{\ell -1}-A_{\ell }$, an increasing function for
$x\geq 1=\mu$. Assume that $\alpha>0$, and let
\begin{equation*}
F_{\ell }(x)\equiv (x-\alpha)S(x)
\end{equation*}
with $S(x)$ a real monic polynomial. From $\alpha>0$ and the fact
that the sequence of nonzero coefficients of $F_{\ell }(x)$
has only one change of sign, it follows that all the coefficients of
$ S(x)$ must be nonnegative; in particular, $S(x)$ is monotonically increasing and
positive for $x\geq 0$. This implies that $P_{\ell }(x)$ is strictly
increasing for $x\geq \mu$, as
\begin{equation*}
P_{\ell }(x)\equiv (x-1)(x-\alpha)S(x)-A_{\ell }.
\end{equation*}
The claim is proved.

Since $P_{\ell }(\mu)=-A_{\ell}<0$ and $P_{\ell }(x)$ is strictly
increasing for $x\geq \mu$, $P_{\ell}(x)$ has a unique zero in
the interval $[\mu,+\infty )$. Therefore, when $\mu=1$ it is the
unique zero in $[1,+\infty)$, and we are done. Finally, assume that
$\mu=\alpha>1$, and we shall see that $P_{\ell}$ is zero--free in
the interval $[1,\mu)$. Indeed, if $1\leq y<\mu=\alpha$ then
$F_{\ell }(y)<0$ and $P_{\ell }(y)=(y-1)F_{\ell }(y)-A_{\ell }\leq
-A_{\ell }<0$, hence $P_{\ell }(y)\neq 0$ for $y\in[1,\mu)$, as
desired. This completes the proof of the lemma.
\end{proof} 

Taking into account the preceding lemma and that $\rho$ is an upper
bound for the moduli of the zeros of $P(z)$, in order to prove
Theorem \ref{mainresult2} it suffices to prove the chain of
inequalities and equalities of (\ref{chain-mainresult2}). First, we
show that
\begin{equation}\label{aux1}
 r_{\ell }=\max (1,\rho) \hskip10pt \text{
for }\ell >q.
\end{equation}
In fact, when $\ell >q$ the definition of $F_{\ell}(x)$ in
(\ref{Fl}) yields $F_{\ell }(x)=x^{\ell-q-1}Q(x)$, where $Q(x)$ is
the polynomial of (\ref{Q}). Since $A_{\ell }=0$ for $\ell
>q$, by (\ref{Pl-equation}) we have
\begin{equation}\label{aux2}
P_{\ell }(x)\equiv x^{\ell-q-1}(x-1)Q(x),
\end{equation}
hence the strictly positive zeros of $P_{\ell }(x)$ are $x=1$ and $x=\rho$, thus proving
(\ref{aux1}).

Finally, we show that $r_{1 }\geq r_{2}\geq \cdots \geq r_{q}> \max
(1,\rho )$. Let $\ell\in \{1,\ldots,q\}$ be given, and we will see
that $r_{\ell+1}\leq r_{\ell}$, with the inequality strict if
$\ell=q$. Set $y=r_{\ell}$. Since
$0=P_{\ell}(y)=(y-1)F_{\ell}(y)-A_{\ell}$, then
$(y-1)F_{\ell}(y)=A_{\ell}$. On the other hand, an easy computation
shows that $F_{\ell +1}(x)= xF_{\ell}(x)-|a_{\ell}|$ for all $x$.
Therefore,
\begin{eqnarray}\label{aux3}
P_{\ell+1}(y) &=&(y-1)F_{\ell+1}(y)-A_{\ell+1} \notag\\
&=&(y-1)(yF_{\ell}(y)-|a_{\ell}|)-A_{\ell+1} \notag\\
&=&y(y-1)F_{\ell}(y)-(y-1)|a_{\ell}|-A_{\ell+1} \notag\\
&=&yA_{\ell}-(y-1)|a_{\ell}|-A_{\ell+1} \notag\\
&=&y(A_{\ell}-|a_{\ell}|)+|a_{\ell}|-A_{\ell+1},
\end{eqnarray}
and from $A_{\ell+1}\leq A_{\ell}$ we have
\begin{eqnarray*}
P_{\ell+1}(y) &\geq &y(A_{\ell}-|a_{\ell}|)+|a_{\ell}|-A_{\ell} \notag\\
&=&(y-1)(A_{\ell}-|a_{\ell}|) \notag\\
&\geq &0.
\end{eqnarray*}
When $\ell <q$, this implies that $r_{\ell}=y\geq r_{\ell+1}$, as we
wanted to see. If $\ell =q$, from (\ref{aux3}) and the fact that
$A_{q}=|a_{q}|>0=A_{q+1}$, it follows that $P_{q+1}(y)=|a_{q}|>0$.
Using the expression of $P_{\ell}(x)$ in (\ref{aux2}), which is
valid for $\ell>q$, we obtain that $P_{q+1}(y)= (y-1)Q(y)>0$, so
$Q(y)>0$ since $y=r_q>1$. This implies that $y>\rho$, hence
$r_{q}=y>\max(1,\rho)=r_{q+1}$, and Theorem \ref{mainresult2}
follows.
\end{proof}

Theorem \ref{mainresult} is a straightforward consequence of the
following lemma.

\begin{lemma}\label{Lemma2}
For any integer number $\ell \geq 1$, we have
\begin{equation*}
P_{\ell }(1+x)= Q_{\ell }(x)-A_{\ell}, \hskip5pt \text{ for all }x,
\end{equation*}
where $Q_{\ell }(x)$ is the polynomial of (\ref{Ql}).
\end{lemma}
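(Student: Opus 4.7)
The plan is to reduce the identity to a direct coefficient comparison by using the factorization $P_\ell(x)=(x-1)F_\ell(x)-A_\ell$ already established in equation (\ref{Pl-equation}). Substituting $x\mapsto 1+x$ gives
$$P_\ell(1+x)=x\,F_\ell(1+x)-A_\ell,$$
so it suffices to show that $x\,F_\ell(1+x)=Q_\ell(x)$, which removes the free constant $-A_\ell$ from both sides and isolates the polynomial identity that really needs proving.

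Next I would expand $F_\ell(1+x)$ using the binomial theorem. Writing $F_\ell(y)=\sum_{k=0}^{\ell-1}c_k y^k$ with $c_{\ell-1}=1$ and $c_k=-|a_{\ell-1-k}|$ for $0\le k\le \ell-2$, and applying $(1+x)^k=\sum_{i=0}^k C^{k}_{i}x^i$, followed by multiplication by $x$ and the index change $m=i+1$, one obtains
$$x\,F_\ell(1+x)=\sum_{m=1}^{\ell}\left(\sum_{k=m-1}^{\ell-1}c_k C^{k}_{m-1}\right)x^m.$$
The leading coefficient ($m=\ell$) comes only from $c_{\ell-1}=1$ and gives $1$, matching the $x^\ell$ term of $Q_\ell$.

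For $1\le m\le \ell-1$, the coefficient above splits as $C^{\ell-1}_{m-1}-\sum_{k=m-1}^{\ell-2}|a_{\ell-1-k}|C^{k}_{m-1}$, and the substitution $j=\ell-1-k$ converts this into
$$C^{\ell-1}_{m-1}-\sum_{j=1}^{\ell-m}C^{\ell-j-1}_{m-1}|a_j|.$$
Then the change of summation index $m=\ell+1-v$ in the expression for $Q_\ell(x)$ from (\ref{Ql}) shows that this is exactly the coefficient of $x^{\ell+1-v}=x^m$ in $Q_\ell(x)$, completing the comparison.

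The main obstacle is purely bookkeeping: aligning the three index shifts (the binomial index $i$, the reindexing $j=\ell-1-k$ for the coefficients of $F_\ell$, and the outer variable change $v=\ell+1-m$ used in defining $Q_\ell$) so that the binomial coefficients $C^{\ell-j-1}_{\ell-v}=C^{\ell-j-1}_{m-1}$ match on the nose. There is no analytic difficulty: once the substitution $P_\ell(1+x)=xF_\ell(1+x)-A_\ell$ is in hand, the result is a clean binomial-expansion identity, and the small edge cases ($\ell=1$, where $F_1\equiv 1$ and $Q_1(x)=x$, so both sides equal $x-A$) can be checked separately.
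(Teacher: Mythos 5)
Your proposal is correct and follows essentially the same route as the paper: both reduce the claim via $P_\ell(1+x)=xF_\ell(1+x)-A_\ell$ to the identity $xF_\ell(1+x)=Q_\ell(x)$, expand $F_\ell(1+x)$ by the binomial theorem, and match coefficients after the index substitutions $j=\ell-1-k$ and $v=\ell+1-m$ (the paper's $i=\ell-v$, $k=\ell-j-1$). The bookkeeping checks out, including the leading term and the $\ell=1$ case.
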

\begin{proof} By (\ref{Pl-equation}) it is only necessary to see that
$xF_{\ell}(1+x)= Q_{\ell }(x)$. For $\ell=1$ it is clear. Set
$b_{0}=1$ and $b_{j}=-|a_{j}|$ for $j\geq 1$. Then $F_{\ell
}(x)=\sum_{k=0}^{\ell -1}b_{\ell -k-1}x^{k}$ and
\begin{eqnarray*}
xF_{\ell }(1+x) &=&x\sum_{k=0}^{\ell -1}b_{\ell -k-1}(1+x)^{k}\notag\\
&=&x\sum_{k=0}^{\ell -1} b_{\ell
-k-1}\sum_{i=0}^{k}C_{i}^{k}x^{i} \notag\\
&=&\sum_{i=0}^{\ell -1}\left( \sum_{k=i}^{\ell -1}C_{i}^{k}b_{\ell
-k-1}\right) x^{i+1} \notag\\
&=&x^{\ell }+\sum_{i=0}^{\ell -2}\left( \sum_{k=i}^{\ell
-1}C_{i}^{k}b_{\ell -k-1}\right) x^{i+1}.
\end{eqnarray*}

The substitutions $i=\ell -v$, ($2\leq v \leq \ell$), and $k=\ell
-j-1$, ($0\leq j \leq v-1$), yield
\begin{eqnarray*}
xF_{\ell }(1+x) &=& x^{\ell }+\sum_{v=2}^{\ell }\left( \sum_{k=\ell
-v}^{\ell
-1}C_{\ell -v}^{k}b_{\ell -k-1}\right) x^{\ell +1-v} \notag\\
&=&x^{\ell }+\sum_{v=2}^{\ell }\left(
\sum_{j=0}^{v-1}C_{\ell -v}^{\ell -j-1}b_{j}\right) x^{\ell +1-v} \notag\\
&=&x^{\ell }+\sum_{v=2}^{\ell }\left( C_{\ell -v}^{\ell
-1}-\sum_{j=1}^{v-1}C_{\ell -v}^{\ell -j-1}|a_{j}|\right) x^{\ell +1-v} \notag\\
&=&Q_{\ell }(x),
\end{eqnarray*}
and lemma follows.
\end{proof}

\begin{proof} [Proof of Theorem \ref{mainresult}]
The change of variable $x=y-1$,
$y\geq 1$, transforms equation (\ref{mainresult-equation}) into
the equation $P_{\ell}(y)=0$. Therefore, numbers
$\varepsilon_{\ell}$ and $r_{\ell}$ of Theorems \ref{mainresult} and
\ref{mainresult2} are related by $r_{\ell}=1+\varepsilon_{\ell}$. Now, applying Theorem \ref{mainresult2} we immediately obtain
Theorem \ref{mainresult}, and this completes the proof.
\end{proof}

\section{Conclusion}
In this note, we have obtained a wide range of upper bounds for the
moduli of the zeros of a complex polynomial. The bounds are summarized in
Theorem \ref{mainresult}, which completes some other known
results on the location of zeros of polynomials.

Finally, we point out that in order to compute the bounds it may be
preferable to use Theorem \ref{mainresult2} instead of Theorem
\ref{mainresult}, because it clarifies and simplifies the auxiliary
equation to be solved (compare expressions (\ref{Ql}) and
(\ref{Pl})). For example, when $\ell=2$, it takes the simple form
$x^{2}-(|a_{1}|+1)x-(A_{2}-|a_{1}|)=0$, and it gives rise to the
bound by Joyal et al. \cite{JLR1967}. For $\ell=3$ and $\ell=4$, we
obtain
\begin{equation*}
x^{3}-(|a_{1}|+1)x^{2}-(|a_{2}|-|a_{1}|)x-(A_{3}-|a_{2}|)=0
\end{equation*}
with $A_{3}=\max_{j\geq 3}|a_{j}|$, and
\begin{equation*}
x^{4}-(|a_{1}|+1)x^{3}-(|a_{2}|-|a_{1}|)x^{2}-(|a_{3}|-|a_{2}|)x-(A_{4}-|a_{3}|)=0
\end{equation*}
with $A_{4}=\max_{j\geq 4}|a_{j}|$. These equations can be
explicitly solved as cubic and quartic equations respectively.

\end{document}